\documentclass{aptpub}

\pdfoutput=1

\usepackage{subfigure}
\usepackage{epsfig}
\usepackage{graphicx}
\usepackage[authoryear,longnamesfirst]{natbib}
\RequirePackage{hypernat}
\RequirePackage[colorlinks,citecolor=blue,urlcolor=blue]{hyperref}

\RequirePackage{hypernat}


\setlength{\topmargin}{-0.8in}
\setlength{\textwidth}{6in}
\setlength{\textheight}{8.4in}

\authornames{T. Appuhamillage and D. Sheldon} 
\shorttitle{Skew Brownian Motion} 

\newcommand{\ds}{\displaystyle} 
\newcommand{\ts}{\textstyle} 
\newcommand{\1}{\mathbf{1}}


\begin{document}

\title{First Passage Time of Skew Brownian Motion} 

\authorone[Oregon State University]{Thilanka Appuhamillage} 
\authortwo[Oregon State University]{Daniel Sheldon} 

\addressone{Department of Mathematics, Oregon State University, 
Corvallis, OR 97331 USA.\\
Email address: ireshara@math.oregonstate.edu} 
\addresstwo{School of Electrical Engineering and Computer Science,
Oregon State University, Corvallis, OR 97331 USA.\\
Email address: sheldon@eecs.oregonstate.edu\\
Research supported in part by the grant DBI-0905885 from the National Science Foundation.} 

\begin{abstract}
Nearly fifty years after the introduction of skew Brownian motion by
\cite{Ito_McKean63}, the first passage time distribution remains unknown. 
In this paper, we generalize results of \cite{Yor} and \cite{Csaki} to
derive formulae for the distribution of ranked excursion heights of
skew Brownian motion. We then derive the first passage time
distribution as a simple corollary.
\end{abstract}

\keywords{Skew Brownian motion; ranked excursion heights; first passage time} 

\ams{60G99}{60H99} 

\section{Introduction}
In this paper, we obtain formulae for the distributions of first
passage time and ranked excursion heights of skew Brownian motion.
Since \cite{Ito_McKean63} first introduced skew Brownian motion,
numerous papers in the mathematics literature have highlighted the
special structure of the process and connected it to diverse
applications; see, for example,
\cite{walsh1}, \cite{LeGall}, \cite{Barlow89},
\cite{Ouknine}, \cite{decamps2006}, \cite{Ramirez06}, \cite{Ramirez08}, and \cite{Ramirez10}.

Skew Brownian motion is broadly applicable to diffusion problems in
which the diffusion coefficient is discontinuous in space.
For example, \cite{app_WRR} analyzed mathematical properties of
skew Brownian motion as they relate to the convection-dispersion
movement of solute through porous media in the presence of a sharp
interface. Their work was motivated by experiments in hydrology
demonstrating a fundamental asymmetry in the breakthrough
curves of solute crossing a sharp interface in opposite directions
\citep{Berkowitz09}.
In this context, the first passage time distribution of skew Brownian
motion describes the breakthrough times of solute injected on one side
of an interface and observed on the other side \citep{app_AAP}.

Similar situations arise problems in physical and natural
sciences. For example, Brownian motion has been widely
used, and critiqued, as a model of animal movement \citep{turchin1996},
\citep{blackwell}.
However, there is an emerging consensus among ecologists that landscape heterogeneity
is a necessary ingredient of movement models \citep{dalziel}. In
particular, sharp interfaces delimiting two
different movement regimes occur at the boundaries of habitat
patches \citep{Wiens1989,butterfly,turchin1991}, or at other
environmental discontinuities such as those in ocean temperatures
\citep{pinaud} or the level of surface chlorophyll in oceans
\citep{polovina}. 
In an ecological context, \cite{McKenzie} advocate the use of first
passage time to model the time required for an organism to first reach
a specified location in the landscape (see also \cite{Fauchald2003}),
and give as an example the foraging behavior of a predator searching
for stationary prey items.

The main contribution of the present paper is to derive the first
passage time distribution of skew Brownian motion. We achieve this by
first characterizing the distribution of ranked excursion heights of
skew Brownian motion. This result generalizes formuale of \cite{Yor}
for the distribution of ranked excursion heights of the standard
Brownian bridge, and analogous formulae presented by \cite{Csaki} for
Brownian motion.
We then apply our formulae for the ranked excursion heights of skew
Brownian motion to derive our main result on first passage time. 

The paper is organized as follows. In Section \ref{results}, we state
the main results. In Section \ref{coupled}, we develop a coupled
construction for two different skew Brownian motion
processes with different skew parameters that leads to an important
relationship between distributions of ranked excursion heights of the
two processes, stated in Theorem \ref{relation}. In Section
\ref{proofs}, we prove the main results as corollaries to Theorem
\ref{relation}. In Section \ref{examples}, we present several examples 
that demonstrate the calculations that are possible using the first
passage time density, and also the assymetry inherent in the first
passage time of particles crossing an interface in opposite directions.

\section{Preliminaries and Main Results}
\label{results}

To set some notation and basic definitions, let $B = \{B_t : t
\geq 0\}$ be the 
\emph{standard Brownian motion} process on a probability space $(\Omega,\mathcal{F},P)$ and let 
$J_1, J_2, \ldots$ denote the excursion intervals of the reflected
process $\{|B_t| : t \geq 0\}$.
For $\alpha \in (0,1)$, let $\{A_m^{(\alpha)}: m=0, 1, \ldots\}$
be a sequence of i.i.d. $\pm 1$ Bernoulli random variables with
$P(A_m^{(\alpha)} = 1) = \alpha$.
Define the \emph{$\alpha$-skew Brownian motion} process $B^{(\alpha)}$ started at $0$ 
by
\begin{equation}
  \label{skew}
  B_t^{(\alpha)} = \sum_{m = 1}^{\infty} \1_{J_m}(t) A_m^{(\alpha)} |B_t|,
\end{equation}
where ${\mathbf 1}_S$ denotes the  indicator function of the set $S$. \\

Now let
\[
M^{(\alpha)}_1(t) \geq M^{(\alpha)}_2(t) \geq ... \geq 0
\]
be the ranked decreasing sequence of excursion heights $\sup_{s \in
J_m \cap [0,t]} B_s^{(\alpha)}$ ranging over all $m$ such that $J_m
\cap [0,t]$ is nonempty. Note that a negative excursion has
height zero, and that the height of the final excursion is
included in the ranked list even if that excursion is incomplete. Our
first main result gives the distribution of ranked excursion heights.

\begin{thm}
\label{main-theorem}
Fix $y\geq 0$ and $t>0$. Then for each $j=1, 2, ...$, the distribution of $M_j^{(\alpha)}(t)$ is given by the formula
\begin{equation*}\label{dist1}
 P_0(M_j^{(\alpha)}(t) > y) = 
\sum_{h=1}^{\infty}
2\binom{h-1}{j-1}
\big(1 - \ts 2\alpha \big)^{h-j} 
\big(\ts 2\alpha \big)^j
\big(1-\Phi((2h-1)y/\sqrt{t})\big),
\end{equation*} 
where $\Phi(\cdot)$ is the standard normal distribution function.
\end{thm}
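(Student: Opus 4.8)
The plan is to deduce Theorem~\ref{main-theorem} from the coupling relation of Theorem~\ref{relation}, bootstrapping off the single exactly-solvable value $\alpha=1/2$. First I would record that case: when $\alpha=1/2$ the process $B^{(1/2)}$ is a standard Brownian motion and $M_j^{(1/2)}(t)$ is its $j$-th largest positive-excursion height on $[0,t]$. For $j=1$ this is simply the running maximum, for which the reflection principle gives $P_0(M_1^{(1/2)}(t)>y)=P_0(|B_t|>y)=2(1-\Phi(y/\sqrt{t}))$, and the general-$j$ statement $P_0(M_j^{(1/2)}(t)>y)=2(1-\Phi((2j-1)y/\sqrt{t}))$ is precisely the Brownian-motion case treated by \cite{Csaki}, which I would quote. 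Note that this is exactly the $\alpha=1/2$ specialization of the claimed formula, since $(1-2\alpha)^{h-j}$ then annihilates every term with $h\neq j$.

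Next, for $\alpha<1/2$, I would invoke the coupled construction. Realizing $B^{(\alpha)}$ and $B^{(1/2)}$ from a common Brownian motion $B$ together with one uniform variable attached to each excursion interval $J_m$ of $|B|$, chosen so that an excursion of positive sign in $B^{(\alpha)}$ is automatically of positive sign in $B^{(1/2)}$, the positive excursions of height exceeding $y$ in $B^{(\alpha)}$ are obtained from those in $B^{(1/2)}$ by an independent $\mathrm{Bernoulli}(2\alpha)$ thinning and carry the same heights. Hence $\{M_j^{(\alpha)}(t)>y\}$ is precisely the event that at least $j$ of the positive, height-$>y$ excursions of $B^{(1/2)}$ survive the thinning; conditioning on how many of them are present and using that the index of the $j$-th surviving one has the negative binomial distribution with parameters $j$ and $2\alpha$, one obtains
\[
P_0\bigl(M_j^{(\alpha)}(t)>y\bigr)=\sum_{h=j}^{\infty}\binom{h-1}{j-1}(2\alpha)^j(1-2\alpha)^{h-j}\,P_0\bigl(M_h^{(1/2)}(t)>y\bigr).
\]
This identity---for a general pair of skew parameters, with $\alpha/\alpha'$ replacing $2\alpha$---is the content of Theorem~\ref{relation}, and together with the base case it proves the formula whenever $\alpha\le 1/2$.

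The real obstacle is the range $\alpha>1/2$, where the monotone thinning runs the wrong way and the signed weights $(1-2\alpha)^{h-j}$ indicate that the formula must be read as an inclusion--exclusion rather than a genuine mixture. I would handle this by inverting the relation of Theorem~\ref{relation} applied to the pair $(B^{(1/2)},B^{(\alpha)})$, which expresses the known sequence $\bigl(2(1-\Phi((2h-1)y/\sqrt{t}))\bigr)_{h\ge1}$ as a ``$1/(2\alpha)$-thinning'' of the unknown sequence $\bigl(P_0(M_h^{(\alpha)}(t)>y)\bigr)_{h\ge1}$. The thinning operators obey the semigroup law $T_s\circ T_{s'}=T_{ss'}$---which reduces to the Vandermonde-type collapse $\binom{i-1}{j-1}\binom{h-1}{i-1}=\binom{h-1}{j-1}\binom{h-j}{i-j}$ followed by the binomial theorem---so $T_{1/(2\alpha)}$ is inverted by $T_{2\alpha}$ (the series still converging since $1-\Phi((2h-1)y/\sqrt{t})$ decays super-exponentially in $h$), and applying $T_{2\alpha}$ to the base-case sequence yields exactly the stated formula. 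An alternative route, uniform in $\alpha\in(0,1)$, is to run the same coupling against $|B|$ itself, keeping each excursion with probability $\alpha$: this gives the same negative binomial mixture with $P_0\bigl(\#\{\text{excursions of }|B|\text{ attaining height}>y\text{ within }[0,t]\}\ge h\bigr)$ in place of the base case, and that probability is computed by decomposing, via the strong Markov property, the time of the $h$-th such excursion into $h$ independent exit times of $(-y,y)$ and $h-1$ independent hitting times of $0$ from $\pm y$, then inverting the Laplace transform $e^{-(h-1)y\sqrt{2\lambda}}\cosh^{-h}(y\sqrt{2\lambda})$ term by term; here the delicate points are the bookkeeping around the incomplete final excursion and the justification of the termwise inversion.
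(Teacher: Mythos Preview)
Your proposal is correct and follows essentially the same route as the paper: quote \cite{Csaki} for the $\alpha=1/2$ base case and then apply the coupling relation of Theorem~\ref{relation} with $\beta=1/2$, handling $\alpha>1/2$ by inversion (you do it via the semigroup identity $T_{p}\circ T_{q}=T_{pq}$, whereas the paper invokes Yor's binomial-moment inversion Lemma~\ref{invert}; these are the same computation). The alternative approach you sketch via the excursions of $|B|$ is not used in the paper.
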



Now let
\[
T_y^{(\alpha)}= \text{inf}\{s\geq 0 : B_s^{(\alpha)}=y\}
\]
denote the first time for $\alpha$-skew Brownian motion to reach $y$
and let $f^{(\alpha)}(x,y,t)$ denote the first passage time density to $y$
at time $t$ of $\alpha$-skew Brownian motion started at $x$. When
$\alpha=1/2$, this is the well known first passage time density
$f(x,y,t)$ for Brownian motion (e.g., see page 30 of \cite{bhattway1}):
\begin{equation*}
  f^{(1/2)}(x,y,t)\equiv f(x,y,t)=\frac{|y-x|}{\sqrt{2\pi}~t^{3/2}}\exp\Bigl\{-~\frac{(y-x)^2}{2t}\Bigr\}.
\end{equation*} 

Notice that $f^{(\alpha)}(x,y,t)dt=P_x(T_y^{(\alpha)}\in dt)$. Our
second main result gives formulae for the first passage time density.

\begin{thm}
\label{first-passage-time}
Fix $t>0$. Then\\
\[
f^{(\alpha)}(x,y,t) = 
\begin{cases}
  g^{(\alpha)}_{x,y}(t) & \text{for}~~ x\leq 0<y\\
 \ds g^{(\alpha)}_{x,y}(t)+h_{x,y}(t)-\sum_{n=1}^{\infty}\frac{2}{\pi n}\sin\Bigl\{\frac{\pi (y-x) n}{y}\Bigr\}g_{0,y}^{(\alpha)}\ast \kappa_{n,y}(t)
       & \text{for}~~ 0<x<y\\
 f(x,y,t) & \text{for}~~ 0<y<x
\end{cases}
\]

and

\[
f^{(\alpha)}(x,y,t) = 
\begin{cases}
  g^{(1-\alpha)}_{-x,-y}(t) & \text{for}~~ y<0\leq x\\
 \ds g^{(1-\alpha)}_{-x,-y}(t)+h_{-x,-y}(t)-\sum_{n=1}^{\infty}\frac{2}{\pi n}\sin\Bigl\{\frac{\pi (y-x) n}{y}\Bigr\}g_{0,-y}^{(1-\alpha)}\ast \kappa_{n,-y}(t)
       & \text{for}~~ y<x<0\\
 f(-x,-y,t) & \text{for}~~ x<y<0
\end{cases}
\]

\vspace{0.3cm}
\noindent 
where $g_{x,y}^{(\alpha)}=\ds
2\alpha\sum_{j=1}^{\infty}(1-2\alpha)^{j-1}\frac{|x-(2j-1)y|}{\sqrt{2\pi}~t^{3/2}}\exp\Bigl\{-~\frac{(x-(2j-1)y)^2}{2t}\Bigr\}$
for $x<y$, 
the quantity $\kappa_{n,y}(t)$
is the density at time $t$ of the exponential distribution with parameter
$\lambda(n,y)=\frac{\pi^2n^2}{2y^2}$, %
and 
$\ds h_{x,y}(t)=\frac{\pi}{y^2}\sum_{n=1}^{\infty}n\exp\Bigl\{-~\frac{\pi^2n^2t}{2y^2}\Bigr\}\sin\Bigl\{\frac{\pi(y-x)n}{y}\Bigr\}$
is the well known formula (e.g. see page 296 in \cite{feller}) for the probability that Brownian motion started at
$x$ reaches zero before reaching $y$, and that this event occurs in the time
interval $dt$.
\end{thm}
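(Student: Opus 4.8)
The plan is to reduce everything to the single computation at $x=0$, which is essentially immediate from Theorem~\ref{main-theorem}, and then to propagate it to all configurations of $x$ and $y$ by the strong Markov property, translation invariance, and the reflection symmetry of skew Brownian motion. First I would treat $x=0<y$. Since $B^{(\alpha)}$ is nonpositive off the positive excursion intervals and $B^{(\alpha)}_0=0$, the running maximum of $B^{(\alpha)}$ on $[0,t]$ coincides with $M^{(\alpha)}_1(t)$; hence $\{T^{(\alpha)}_y\le t\}=\{M^{(\alpha)}_1(t)\ge y\}$. Specializing Theorem~\ref{main-theorem} to $j=1$ (so that $\binom{h-1}{0}=1$) gives $P_0(T^{(\alpha)}_y\le t)=\sum_{h\ge1}4\alpha(1-2\alpha)^{h-1}\bigl(1-\Phi((2h-1)y/\sqrt t)\bigr)$; the series and its $t$-derivative converge uniformly on compact subsets of $(0,\infty)$ because $|1-2\alpha|<1$, so differentiating term by term and using $\tfrac{d}{dt}\bigl(1-\Phi(c/\sqrt t)\bigr)=\tfrac{c}{2\sqrt{2\pi}\,t^{3/2}}e^{-c^2/(2t)}$ with $c=(2h-1)y$ produces exactly $g^{(\alpha)}_{0,y}(t)$.

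Next I would dispose of the configurations that reduce either to standard Brownian motion or to the $x=0$ case. If $0<y<x$, then $B^{(\alpha)}$ started at $x$ agrees with standard Brownian motion up to $T^{(\alpha)}_0$, and because $y\in(0,x)$ it reaches $y$ strictly before it could reach $0$; hence $f^{(\alpha)}(x,y,t)=f(x,y,t)$. If $x\le0<y$, the process runs as standard Brownian motion until it hits $0$ (with density $f(x,0,\cdot)$) and then, by the strong Markov property, restarts as skew Brownian motion from $0$, so $f^{(\alpha)}(x,y,\cdot)=f(x,0,\cdot)*g^{(\alpha)}_{0,y}$; writing $g^{(\alpha)}_{0,y}=2\alpha\sum_j(1-2\alpha)^{j-1}f(0,(2j-1)y,\cdot)$ and applying $f(x,0,\cdot)*f(0,z,\cdot)=f(x,z,\cdot)$ term by term recovers $g^{(\alpha)}_{x,y}(t)$. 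Finally, negating the Bernoulli variables $A^{(\alpha)}_m$ in the construction~(\ref{skew}) turns an $\alpha$-skew process into a $(1-\alpha)$-skew process, so $-B^{(\alpha)}\stackrel{d}{=}B^{(1-\alpha)}$ and $f^{(\alpha)}(x,y,t)=f^{(1-\alpha)}(-x,-y,t)$; this carries the three regimes with $y<0$ onto the three regimes with $y>0$ already handled, which accounts for the second displayed case of the theorem.

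The substantive case is $0<x<y$. The plan here is to condition on the first time $\tau$ that the process leaves $(0,y)$: up to $\tau$ it is standard Brownian motion from $x$, so either it reaches $y$ before $0$ — contributing the sub-density $h_{x,y}$ of reaching $y$ before $0$ — or it reaches $0$ first and then, by the strong Markov property, restarts as skew Brownian motion from $0$ and needs an independent additional amount of time with density $g^{(\alpha)}_{0,y}$. This yields $f^{(\alpha)}(x,y,\cdot)=h_{x,y}+p_{x,y}*g^{(\alpha)}_{0,y}$, where $p_{x,y}$ is the sub-density of reaching $0$ before $y$. To read off the displayed closed form I would take Laplace transforms and use the classical identities $\widehat{f(x,z,\cdot)}(\mu)=e^{-|z-x|\sqrt{2\mu}}$, $\widehat{h_{x,y}}(\mu)=\sinh(x\sqrt{2\mu})/\sinh(y\sqrt{2\mu})$, $\widehat{p_{x,y}}(\mu)=\sinh((y-x)\sqrt{2\mu})/\sinh(y\sqrt{2\mu})$, together with the geometric series $\widehat{g^{(\alpha)}_{0,y}}(\mu)=2\alpha e^{-y\sqrt{2\mu}}/\bigl(1-(1-2\alpha)e^{-2y\sqrt{2\mu}}\bigr)$. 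After a short algebraic simplification the transform collapses, and inverting it — recognizing $\widehat{\kappa_{n,y}}(\mu)=\lambda(n,y)/(\lambda(n,y)+\mu)$ and the Mittag--Leffler expansion $\sinh(a\sqrt{2\mu})/\sinh(b\sqrt{2\mu})=\sum_{n\ge1}\tfrac{2}{\pi n}\sin\bigl(\tfrac{\pi(b-a)n}{b}\bigr)\lambda(n,b)/(\lambda(n,b)+\mu)$, so that $h_{x,y}=\sum_n\tfrac{2}{\pi n}\sin(\tfrac{\pi(y-x)n}{y})\kappa_{n,y}$ and its convolution with $g^{(\alpha)}_{0,y}$ is the stated series — gives the displayed expression in terms of $g^{(\alpha)}_{x,y}$, $h_{x,y}$ and $\kappa_{n,y}$.

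The main obstacle is this last step: assembling $h_{x,y}+p_{x,y}*g^{(\alpha)}_{0,y}$ into the displayed form requires a careful manipulation, and the analytic points that need genuine attention are the term-by-term inversion of the Laplace transform and the interchange of the infinite sum with the convolution in the $\kappa_{n,y}$ series, which rest on the absolute/uniform convergence of the spectral series for the killed-interval densities and the exponential decay of $\widehat{g^{(\alpha)}_{0,y}}(\mu)$. Everything else is either a direct consequence of Theorem~\ref{main-theorem} or a routine application of the strong Markov property and the reflection symmetry.
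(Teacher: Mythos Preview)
Your proposal tracks the paper closely in every regime except $0<x<y$: the reduction at $x=0$ via Theorem~\ref{main-theorem}, the strong-Markov convolution for $x\le 0<y$, the trivial case $0<y<x$, and the reflection $-B^{(\alpha)}\stackrel{d}{=}B^{(1-\alpha)}$ for $y<0$ are exactly what the paper does (modulo notation).

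In the $0<x<y$ case you and the paper start from the same first-exit decomposition $f^{(\alpha)}(x,y,\cdot)=h_{x,y}+p_{x,y}*g^{(\alpha)}_{0,y}$, but the routes to the displayed closed form diverge. The paper never passes to Laplace transforms; it stays in the time domain and its key maneuver is the add-and-subtract splitting
\[
P_x(T_0\in ds,\,T_0<T_y)\;=\;P_x(T_0\in ds)\;-\;P_x(T_0\in ds,\,T_0>T_y).
\]
Convolving the first piece with $g^{(\alpha)}_{0,y}$ is exactly what produces the $g^{(\alpha)}_{x,y}$ term; the second piece is expanded via the Feller spectral series (the time-domain incarnation of your Mittag--Leffler expansion), and after differentiation two of the resulting four sums cancel, leaving the $\kappa_{n,y}$-convolution correction. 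Your Laplace route is a legitimate alternative and rests on the same spectral structure, but be aware that the displayed form does \emph{not} fall out of $\widehat h_{x,y}+\widehat p_{x,y}\,\widehat g^{(\alpha)}_{0,y}$ by a short simplification alone: to isolate $g^{(\alpha)}_{x,y}$ you must perform, in the transform domain, the analogue of the paper's splitting (write $\widehat p_{x,y}=e^{-x\sqrt{2\mu}}-\widehat q_{x,y}$ with $q_{x,y}$ the ``hit $y$ first, then $0$'' sub-density, and re-expand). Your sketch leaves that step implicit, and it is precisely the step where the three named ingredients $g^{(\alpha)}_{x,y}$, $h_{x,y}$, $\kappa_{n,y}$ separate.
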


The cases $0<y<x$ and $x<y<0$ in the first passage time density are clear because 
all paths starting at $x$ reach $y$ without hitting zero and
hence they are all Brownian motion paths. 
In cases, $x \leq 0 < y$ and $y<0\leq x$, all paths must cross zero and
densities are obtained as a straightforward corollary of Theorem
\ref{main-theorem} and the fact that $P_0(T^{(\alpha)}_y\in dt)=P_0(T^{(1-\alpha)}_{-y}\in dt)$. 
The situation is most complicated when $0 < x < y$ and $y<x<0$, 
and one must consider two types of paths from $x$ to $y$: those
that cross zero, and those that reach $y$ before they reach zero.
 
Notice that when $\alpha=1/2$, we recover existing results for
standard Brownian motion. Namely, from Theorem \ref{main-theorem}, we recover
the distribution of ranked excursion heights stated in Theorem 3.1 of 
\cite{Csaki}, and from Theorem \ref{first-passage-time} we
recover the well known first passage time distribution of standard
Brownian motion (this fact is not immediately obvious, but nonetheless
true, in most complicated cases when $0 < x < y$ and $y<x<0$).

\section{Relating excursion heights for $B^{(\alpha)}$ and $B^{(\beta)}$}
\label{coupled}

Let $0\leq\alpha<\beta\leq 1$. Consider the following coupled
construction of $\alpha$-skew and $\beta$-skew Brownian motion.
Let $B$ be the standard Brownian motion process and
let $A^{(\beta)}=\{A_m^{(\beta)}:m=0,1,\ldots\}$ be independently chosen excursion
signs so that Equation \eqref{skew} yields an instance of $\beta$-skew
Brownian motion.

Next,  let $\{A_m^{(\alpha/\beta)}$: $m=0, 1, \ldots\}$ be a sequence
of i.i.d. $\pm 1$ Bernoulli random variables independent of $A^{(\beta)}$ and
$B$ with $P(A_m^{(\alpha/\beta)} = 1) = \alpha/\beta$.
Define $A_m^{(\alpha)}$ as follows
\[
A_m^{(\alpha)} = 
\begin{cases}
  1& A_m^{(\beta)} = 1, A_m^{(\alpha/\beta)} = 1, \\
 -1& \text{otherwise}.
\end{cases}
\]
By construction, the sequence $\{A_m^{(\alpha)} : m = 0, 1,
\ldots\}$ consists of i.i.d. $\pm 1$ Bernoulli random variables that are
independent of $B$ with $P(A_m^{(\alpha)} = 1) = \alpha$. Hence, by
using the variables $A_m^{(\alpha)}$ as the excursion signs in
Equation \eqref{skew}, we obtain an instance $B^{(\alpha)}$ of
$\alpha$-skew Brownian motion.

We think of this as a two-step process: first, construct $B^{(\beta)}$
by independently setting each excursion of $|B|$ to be positive with
probability $\beta$; then, for each positive excursion
of $B^{(\beta)}$, independently decide whether to keep it positive
(with probability $\alpha/\beta$), or flip it to be negative (with
probability $1 - \alpha/\beta$). 

The following theorem is motivated by this coupled construction.


\begin{thm}
\label{relation}
Fix $y\geq 0$, $t>0$ and $\alpha,\beta \in (0,1)$. For each $j=1, 2,
...$, the following relation between ranked excursion heights of
$\alpha$- and $\beta$-skew Brownian motions holds.

\begin{equation}
  \label{excursionRelation}
P_0(M_j^{(\alpha)}(t) > y) = 
\sum_{h=1}^{\infty} 
\binom{h-1}{j-1} \Big(1 - \ts\frac{\alpha}{\beta}\Big)^{h-j} 
\Big(\ts \frac{\alpha}{\beta} \Big)^j
P_0(M_h^{(\beta)}(t) > y).
\end{equation}

\end{thm}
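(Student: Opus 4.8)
The plan is to exploit the two-step coupled construction directly. Fix a sample path of the standard Brownian motion $B$ and its excursion intervals $J_1, J_2, \ldots$, together with the signs $A^{(\beta)}$, so that $B^{(\beta)}$ is determined. Condition on this $\beta$-skew path. Among the excursion intervals that meet $[0,t]$, let the ones that $B^{(\beta)}$ makes positive be indexed in decreasing order of their heights, say with heights $m_1 \geq m_2 \geq \cdots \geq 0$; these are exactly $M^{(\beta)}_1(t), M^{(\beta)}_2(t), \ldots$. Now the second step flips each positive excursion of $B^{(\beta)}$ independently, keeping it positive with probability $\alpha/\beta$. A positive excursion of $B^{(\beta)}$ survives as a positive excursion of $B^{(\alpha)}$ precisely when its own flip comes up ``keep''; negative excursions of $B^{(\beta)}$ are already negative (height zero) and contribute nothing. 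Therefore $M^{(\alpha)}_j(t)$, the $j$-th largest positive excursion height of $B^{(\alpha)}$, equals $m_h$ where $h$ is the index of the $j$-th excursion (in the $\beta$-ranked list) whose flip is ``keep''.

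Next I would translate this into a probabilistic statement. Working conditionally on the $\beta$-skew path, let $X_1, X_2, \ldots$ be i.i.d. Bernoulli$(\alpha/\beta)$ indicators, where $X_i = 1$ means the $i$-th largest positive $\beta$-excursion is kept. The event $\{M^{(\alpha)}_j(t) > y\}$ holds iff at least $j$ of the kept excursions have height exceeding $y$. Since heights are listed in decreasing order, ``at least $j$ kept excursions have height $> y$'' is the same as ``among the first $h$ excursions (for some $h$ with $m_h > y$) exactly $j$ are kept and the $j$-th kept one is the $h$-th excursion'', i.e. the $j$-th success in the Bernoulli sequence $X_1, X_2, \ldots$ occurs at index $h$ with $m_h > y$. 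The index of the $j$-th success has the negative binomial law: $P(\text{$j$-th success at index } h) = \binom{h-1}{j-1}(1-\alpha/\beta)^{h-j}(\alpha/\beta)^j$. Hence, conditionally on the $\beta$-path,
\[
P\big(M^{(\alpha)}_j(t) > y \mid B^{(\beta)}\text{-path}\big) = \sum_{h=1}^{\infty} \binom{h-1}{j-1}\Big(1-\ts\frac{\alpha}{\beta}\Big)^{h-j}\Big(\ts\frac{\alpha}{\beta}\Big)^j \1_{\{m_h > y\}},
\]
using independence of the flip variables from the $\beta$-path and from each other.

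Finally I would take expectations over the $\beta$-skew path. Since $m_h = M^{(\beta)}_h(t)$ on that path, $E[\1_{\{m_h > y\}}] = P_0(M^{(\beta)}_h(t) > y)$, and the binomial coefficients are deterministic constants, so Tonelli's theorem (all terms nonnegative) lets me interchange sum and expectation to obtain exactly \eqref{excursionRelation}. One subtlety to handle carefully: the ranked list $M^{(\beta)}_h(t)$ includes negative (height-zero) excursions as zeros and includes the possibly-incomplete final excursion, and I must check the bookkeeping matches between the $\alpha$- and $\beta$-lists — in particular that flipping a positive excursion to negative simply moves its height to zero in the $\alpha$-list, consistent with the convention, and that the final incomplete excursion is treated identically in both. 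I expect this bookkeeping — making the informal ``$j$-th success at index $h$'' argument fully rigorous given the conventions about zero-height and incomplete excursions — to be the main obstacle; the negative-binomial computation itself is routine.
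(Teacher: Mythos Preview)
Your argument is essentially the paper's own argument for the case $\alpha \le \beta$: the paper states that by the two-step construction $M_j^{(\alpha)}(t) = M_{H_j}^{(\beta)}(t)$ where $H_j$ is negative binomial with parameters $j$ and $\alpha/\beta$, and then sums over $h$. Your conditioning-and-Tonelli presentation is a slightly more explicit version of the same idea, and the bookkeeping worry you flag is handled exactly as you expect (zero-height and incomplete excursions are treated identically in both lists because the flips act only on the signs, not on the heights or on which intervals meet $[0,t]$).

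However, there is a genuine gap: the theorem is stated for \emph{all} $\alpha,\beta\in(0,1)$, and your argument requires $\alpha/\beta\le 1$ so that the flip variables are honest Bernoulli$(\alpha/\beta)$. When $\alpha>\beta$ the quantity $1-\alpha/\beta$ is negative, the right-hand side of \eqref{excursionRelation} is an alternating series, and there is no probabilistic coupling to appeal to. The paper closes this gap by an inversion argument: one first establishes \eqref{excursionRelation} for $\alpha<\beta$ as you did, then applies a binomial-moment inversion lemma (Lemma~9 of Pitman--Yor) to the sequences
\[
b_k = \Big(1-\ts\frac{\alpha}{\beta}\Big)^k\Big(\ts\frac{\alpha}{\beta}\Big)^{-k-1}P_0\big(M_{k+1}^{(\alpha)}(t)>y\big),
\qquad
a_m = \Big(1-\ts\frac{\alpha}{\beta}\Big)^m P_0\big(M_{m+1}^{(\beta)}(t)>y\big),
\]
which yields the same formula with the roles of $\alpha$ and $\beta$ exchanged. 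You should add this second half.
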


Before giving the proof of Theorem \ref{relation}, we state the following lemma from \cite{Yor}, as we use it in the proof. 

\begin{lem}[\cite{Yor}, Lemma 9]
\label{invert}
Let 
\[
b_k = \sum_{m=0}^{\infty} \binom{m}{k} a_m, \quad k=0, 1, ...
\]
be the binomial moments of a nonnegative sequence $(a_m,~m=0, 1, ...)$. Let $B(\theta):=\sum_{k=0}^{\infty}b_k\theta^k$ and suppose $B(\theta_1)<\infty$ for some $\theta_1>1$. Then
\[
a_m=\sum_{k=0}^{\infty} (-1)^{k-m}\binom{k}{m} b_k, \quad m=0, 1, ...,
\]
where the series is absolutely convergent. 
\end{lem}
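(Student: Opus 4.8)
The plan is to recognize the binomial-moment relation as a statement about the ordinary generating function $A(x):=\sum_{m=0}^{\infty}a_m x^m$ and then to extract the coefficient $a_m$ by an explicit, absolutely convergent rearrangement. First I would record the generating-function identity hidden in the hypothesis. Since $\binom{m}{k}=0$ for $k>m$, the inner sum defining $b_k$ is finite, and Tonelli's theorem (all terms nonnegative) gives, for every $\theta\ge 0$,
\[
B(\theta)=\sum_{k=0}^{\infty}\theta^{k}\sum_{m=0}^{\infty}\binom{m}{k}a_m
=\sum_{m=0}^{\infty}a_m\sum_{k=0}^{m}\binom{m}{k}\theta^{k}
=\sum_{m=0}^{\infty}a_m(1+\theta)^{m}.
\]
Evaluating at $\theta=\theta_1$ yields $\sum_m a_m(1+\theta_1)^m=B(\theta_1)<\infty$ with $1+\theta_1>2$, so the power series $A$ has radius of convergence at least $1+\theta_1>2$. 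This is the one place the hypothesis $\theta_1>1$ is essential: it provides strictly more than a factor of $2$ of room, which is exactly what the inversion will later consume.

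Next I would establish absolute convergence of the proposed inversion series, which I expect to be the main obstacle. The absolute value of the $k$-th term is $\binom{k}{m}b_k$ (as $b_k\ge 0$), so by Tonelli again,
\[
\sum_{k=m}^{\infty}\binom{k}{m}b_k
=\sum_{k=m}^{\infty}\binom{k}{m}\sum_{n=0}^{\infty}\binom{n}{k}a_n
=\sum_{n=m}^{\infty}a_n\sum_{k=m}^{n}\binom{n}{k}\binom{k}{m}
=\sum_{n=m}^{\infty}a_n\binom{n}{m}2^{\,n-m},
\]
where the inner evaluation uses the subset-of-a-subset identity $\binom{n}{k}\binom{k}{m}=\binom{n}{m}\binom{n-m}{k-m}$ together with $\sum_{j=0}^{n-m}\binom{n-m}{j}=2^{\,n-m}$. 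Choosing any $\rho$ with $2<\rho<1+\theta_1$, the polynomial factor is dominated, $\binom{n}{m}2^{\,n-m}\le C_{m,\rho}\,\rho^{\,n}$ for all $n$, whence the last sum is bounded by $C_{m,\rho}\sum_n a_n\rho^{\,n}\le C_{m,\rho}\,B(\theta_1)<\infty$. Thus the inversion series converges absolutely, and this step is precisely where $\theta_1>1$ is used.

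With absolute convergence in hand, the remainder is a routine rearrangement. I would substitute $b_k=\sum_n\binom{n}{k}a_n$ into $\sum_{k}(-1)^{k-m}\binom{k}{m}b_k$ and apply Fubini's theorem, justified by the bound just proved, to obtain $\sum_{n}a_n\sum_{k}(-1)^{k-m}\binom{k}{m}\binom{n}{k}$. The inner sum is finite ($m\le k\le n$); reusing the same combinatorial identity and the binomial theorem,
\[
\sum_{k=m}^{n}(-1)^{k-m}\binom{n}{m}\binom{n-m}{k-m}
=\binom{n}{m}\sum_{j=0}^{n-m}(-1)^{j}\binom{n-m}{j}
=\binom{n}{m}(1-1)^{\,n-m},
\]
which equals $1$ when $n=m$ and $0$ when $n>m$ (the sum being empty for $n<m$). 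Hence only the $n=m$ term survives and the right-hand side collapses to $a_m$, as claimed. The overall structure is thus a single combinatorial identity used twice: once with positive signs to bound the series, and once with alternating signs to evaluate it, the hypothesis $\theta_1>1$ entering solely to render the $2^{\,n-m}$ growth summable.
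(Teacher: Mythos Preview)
Your argument is correct. The paper itself does not supply a proof of this lemma: it is quoted verbatim from \cite{Yor} and invoked as a black box in the proof of Theorem~\ref{relation}. So there is no ``paper's own proof'' to compare against here.

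That said, your write-up is a clean, self-contained verification. The two-step use of the subset-of-a-subset identity $\binom{n}{k}\binom{k}{m}=\binom{n}{m}\binom{n-m}{k-m}$---first with positive signs to bound the double sum by $\sum_n a_n\binom{n}{m}2^{\,n-m}$, then with alternating signs to collapse it to $a_m$---is exactly the right mechanism, and you correctly identify that the hypothesis $\theta_1>1$ is needed precisely to absorb the $2^{\,n-m}$ factor. The generating-function remark $B(\theta)=A(1+\theta)$ at the outset is a nice orientation, though strictly speaking your proof never uses it beyond extracting the bound $\sum_m a_m(1+\theta_1)^m<\infty$; the actual inversion is carried out purely at the level of coefficients via Fubini rather than by, say, writing $A(x)=B(x-1)$ and expanding.
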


\begin{proof}[Proof of Theorem \ref{relation}]

For $\alpha < \beta$, we have by the two-step construction of the
excursion sign $A_m^{(\alpha)}$ that 
$M_j^{(\alpha)}(t) = M_{H_j}^{(\beta)}(t)$, 
where $H_j$ has a negative binomial distribution:
\[
P(H_j = h) = 
\binom{h-1}{j-1} \Big(1 - \ts\frac{\alpha}{\beta}\Big)^{h-j} 
\Big( \frac{\alpha}{\beta} \Big)^j.
\]
Hence
\begin{equation}
P_0(M_j^{(\alpha)}(t) > y) = 
\sum_{h=1}^{\infty} 
\binom{h-1}{j-1} \Big(1 - \ts\frac{\alpha}{\beta}\Big)^{h-j} 
\Big(\ts \frac{\alpha}{\beta} \Big)^j
P_0(M_h^{(\beta)}(t) > y).
\end{equation}
\\
\noindent
For $\beta < \alpha$, the relation can be inverted by an application
of Lemma \ref{invert}. Let $k:=j-1$ and $m:=h-1$. Then one can write
\eqref{excursionRelation} as

\begin{equation}
P_0(M_{k+1}^{(\alpha)}(t) > y) = 
\sum_{m=0}^{\infty} 
\binom{m}{k} \Big(1 - \ts\frac{\alpha}{\beta}\Big)^{m-k} 
\Big(\ts \frac{\alpha}{\beta} \Big)^{k+1}
P_0(M_{m+1}^{(\beta)}(t) > y).
\end{equation}
We then apply Lemma \ref{invert} to the sequences
\[
b_k := \Big(1 - \ts\frac{\alpha}{\beta}\Big)^k\Big(\ts \frac{\alpha}{\beta} \Big)^{-k-1}P_0(M_{k+1}^{(\alpha)}(t) > y),
\quad a_m := \Big(1 - \ts\frac{\alpha}{\beta}\Big)^m P_0(M_{m+1}^{(\beta)}(t) > y).
\] 
After simplifying, we obtain
\begin{equation}
P_0(M_j^{(\beta)}(t) > y) = 
\sum_{h=1}^{\infty}
\binom{h-1}{j-1} \Big(1 - \ts\frac{\beta}{\alpha}\Big)^{h-j} 
\Big(\ts \frac{\beta}{\alpha} \Big)^j
P_0(M_h^{(\alpha)}(t) > y).
\end{equation}

\end{proof}

\section{Proofs of Main Theorems}
\label{proofs}

We now observe that the main results announced in the introduction will follow as corollaries to Theorem \ref{relation}. We first prove the Theorem \ref{main-theorem}.

\begin{proof}[Proof of Theorem \ref{main-theorem}]
By Theorem 3.1 in 
\cite{Csaki}, 
\begin{equation}
P_0(M_j^{(1/2)}(t) > y) = 2\Big(1 - \Phi\Big((2j - 1)\frac{y}{\sqrt{t}}\Big)\Big)
\end{equation}
The result is immediate from Theorem \ref{relation} by taking $\beta=1/2$ in \eqref{excursionRelation}.
\end{proof}
 
We now use Theorem \ref{main-theorem} and the following corollary to compute the distribution of first
passage time asserted in Theorem \ref{first-passage-time}.

\begin{cor}
\label{corollary-first-passage-time-from-0}
Fix $t>0$. Then
\begin{equation}
\label{first-passage-time-from-0}
P_0(T_y^{(\alpha)} \in dt) = 
\begin{cases}
 2\alpha\sum_{h=1}^{\infty}(1-2\alpha)^{h-1}\frac{(2h-1)y}{\sqrt{2\pi}~t^{3/2}}\exp\Bigl\{-~\frac{((2h-1)y)^2}{2t}\Bigr\}~dt & \text{for} ~y>0\\
 2(1-\alpha)\sum_{h=1}^{\infty}(2\alpha-1)^{h-1}\frac{(2h-1)(-y)}{\sqrt{2\pi}~t^{3/2}}\exp\Bigl\{-~\frac{((2h-1)y)^2}{2t}\Bigr\}~dt & \text{for} ~y<0
\end{cases}
\end{equation}
\end{cor}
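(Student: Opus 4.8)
\textbf{Proof proposal for Corollary \ref{corollary-first-passage-time-from-0}.}

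The plan is to connect the first passage time $T_y^{(\alpha)}$ to the ranked excursion heights $M_j^{(\alpha)}(t)$ via a simple observation: for $y > 0$, the event $\{T_y^{(\alpha)} \le t\}$ is exactly the event that \emph{some} excursion of $B^{(\alpha)}$ completed (or running) by time $t$ reaches height at least $y$, i.e. that the tallest excursion height exceeds $y$. Thus $P_0(T_y^{(\alpha)} \le t) = P_0(M_1^{(\alpha)}(t) > y)$. Applying Theorem \ref{main-theorem} with $j = 1$ gives $P_0(T_y^{(\alpha)} \le t) = \sum_{h=1}^\infty 2\alpha(1-2\alpha)^{h-1}\big(1 - \Phi((2h-1)y/\sqrt{t})\big)$, since $\binom{h-1}{0} = 1$. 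I would then differentiate this cumulative distribution function in $t$ term by term: the derivative of $1 - \Phi((2h-1)y/\sqrt{t})$ with respect to $t$ is $\frac{(2h-1)y}{\sqrt{2\pi}\,t^{3/2}}\exp\{-((2h-1)y)^2/(2t)\}$ by the chain rule together with $\Phi'(u) = \frac{1}{\sqrt{2\pi}}e^{-u^2/2}$, and multiplying by the constants $2\alpha(1-2\alpha)^{h-1}$ yields exactly the claimed density for $y > 0$.

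For $y < 0$ I would invoke the symmetry relation $P_0(T_y^{(\alpha)} \in dt) = P_0(T_{-y}^{(1-\alpha)} \in dt)$, which holds because reflecting $B^{(\alpha)}$ through $0$ produces $B^{(1-\alpha)}$ (each positive excursion becomes negative and vice versa, so the sign probability $\alpha$ becomes $1-\alpha$). Since $-y > 0$ when $y < 0$, applying the already-established $y > 0$ case with $\alpha$ replaced by $1-\alpha$ and $y$ replaced by $-y$ gives $2(1-\alpha)\sum_{h=1}^\infty (1-2(1-\alpha))^{h-1}\frac{(2h-1)(-y)}{\sqrt{2\pi}\,t^{3/2}}\exp\{-((2h-1)(-y))^2/(2t)\}\,dt$; noting $1 - 2(1-\alpha) = 2\alpha - 1$ and $((2h-1)(-y))^2 = ((2h-1)y)^2$ recovers the second case exactly.

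The main technical point to be careful about is justifying the term-by-term differentiation of the series $\sum_h 2\alpha(1-2\alpha)^{h-1}(1 - \Phi((2h-1)y/\sqrt{t}))$. This is routine but needs a word: for fixed $y > 0$ and $t$ bounded away from $0$, the differentiated terms $\frac{(2h-1)y}{\sqrt{2\pi}\,t^{3/2}}\exp\{-((2h-1)y)^2/(2t)\}$ decay super-exponentially in $h$ (Gaussian in $2h-1$), so the series of derivatives converges uniformly on compact $t$-intervals in $(0,\infty)$, which licenses differentiating under the summation. One should also note that when $\alpha = 1/2$ the series collapses to its single $h=1$ term and reproduces the classical Brownian first passage density $f(0,y,t)$, a consistency check. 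No other obstacle arises; the corollary really is an immediate consequence of Theorem \ref{main-theorem} at $j=1$ together with the reflection symmetry.
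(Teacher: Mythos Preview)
Your approach is exactly the paper's: identify $P_0(T_y^{(\alpha)}<t)=P_0(M_1^{(\alpha)}(t)>y)$, apply Theorem~\ref{main-theorem} at $j=1$, differentiate in $t$, and handle $y<0$ via the reflection $P_0(T_y^{(\alpha)}\in dt)=P_0(T_{-y}^{(1-\alpha)}\in dt)$. One arithmetic slip to fix: Theorem~\ref{main-theorem} at $j=1$ gives a leading constant of $2\cdot(2\alpha)=4\alpha$, not $2\alpha$, and correspondingly $\frac{d}{dt}\bigl(1-\Phi((2h-1)y/\sqrt{t})\bigr)=\frac{(2h-1)y}{2\sqrt{2\pi}\,t^{3/2}}\exp\{-((2h-1)y)^2/(2t)\}$ (you dropped the $1/2$ from $\frac{d}{dt}t^{-1/2}$); these two factor-of-two errors cancel, so your final density is right, but each intermediate line should be corrected.
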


\begin{proof}
For the case $y>0$ and $t>0$, we have the following relation between the distributions of 
$T_y^{(\alpha)}$ and the highest excursion of skew Brownian motion started at $0$:
\[
P_0(T_y^{(\alpha)} < t) = P_0(M_1^{(\alpha)}(t) > y).
\]

Thus using Theorem \ref{main-theorem}, one has
\begin{equation*}
  \begin{split}
     P_0(T_y^{(\alpha)} < t) 
     &= P_0(M_1^{(\alpha)}(t) > y)\\
     &= \displaystyle 4\alpha\sum_{h=1}^{\infty} (1-2\alpha)^{h-1}
     \int_{\frac{(2h-1)y}{\sqrt{t}}}^{\infty} \frac{1}{\sqrt{2\pi}}\exp\Bigl\{-\frac{z^2}{2}\Bigr\}~dz.
  \end{split}
\end{equation*}
The result is immediate after taking the derivative of the above expression with respect to $t$.

For the case $y<0$, use the relation $P_0(T^{(\alpha)}_y\in dt)=P_0(T^{(1-\alpha)}_{-y}\in dt)$ and the case $y>0$.

\end{proof}

\begin{proof}[Proof of Theorem \ref{first-passage-time}]
Let $T_y\equiv T_y^{(1/2)}$ denote the first time for standard Brownian motion to reach $y$. 
Recalling that $\ds P_0(T_y\in dt)=\frac{|y|}{\sqrt{2\pi}~t^{3/2}}\exp\Bigl\{-~\frac{y^2}{2t}\Bigr\}~dt$; 
e.g. see page 30 in \cite{bhattway1}, one can write equation (\ref{first-passage-time-from-0}) as
\begin{equation}
\label{first-passage-time-from-0-simple}
P_0(T_y^{(\alpha)} \in dt) = 
\begin{cases}
 2\alpha\sum_{h=1}^{\infty}(1-2\alpha)^{h-1}P_0(T_{(2h-1)y}\in dt) & \text{for}~y>0\\
 2(1-\alpha)\sum_{h=1}^{\infty}(2\alpha-1)^{h-1}P_0(T_{(2h-1)y}\in dt) & \text{for}~y<0
\end{cases}
\end{equation}
 
Now note that $T_0^{(\alpha)}$ is distributed as $T_0$ under $P_x$ for $x\neq 0$, $0<\alpha < 1$. 
So clearly for $t>0$, one has
\[
P_x(T_0^{(\alpha)} > t) = P_x(T_0 > t).  
\]

We prove the case $y>0$ in the Theorem \ref{first-passage-time}. The case $y<0$ is similar.

\noindent {\it Case $x\leq 0<y$} :

Using the strong Markov property of skew Brownian motion,
\begin{equation}
\label{convolution}
\ds P_x(T_y^{(\alpha)} > t) =\int_0^t P_x(T_0 > t-s)P_0(T_y^{(\alpha)} \in ds).
\end{equation} 

Then from the first case of equation (\ref{first-passage-time-from-0-simple}), one has 
\begin{equation}\label{for-x-negative}
 \begin{split}
   \ds P_x(T_y^{(\alpha)} > t)
     &= 2\alpha\sum_{h=1}^{\infty}(1-2\alpha)^{h-1} \int_0^t P_x(T_0 > t-s)P_0(T_{(2h-1)y}\in ds) \\
     &= 2\alpha\sum_{h=1}^{\infty}(1-2\alpha)^{h-1} P_x(T_{(2h-1)y}>t) 
 \end{split}
\end{equation}  

By differentiating the above expression with respect to $t$ and recalling \\
$\ds P_x(T_y\in dt)= \frac{|y-x|}{\sqrt{2\pi}~t^{3/2}}\exp\Bigl\{-~\frac{(y-x)^2}{2t}\Bigr\}~dt$, one has
\begin{equation}
 \ds P_x(T_y^{(\alpha)}\in dt)=
 2\alpha\sum_{h=1}^{\infty}(1-2\alpha)^{h-1}\frac{|x-(2h-1)y|}{\sqrt{2\pi}~t^{3/2}}\exp\Bigl\{-~\frac{(x-(2h-1)y)^2}{2t}\Bigr\}~dt
\end{equation}  

\noindent {\it Case $0<x<y$} :

Observe that
\begin{equation}
\label{1}
P_x(T_y^{(\alpha)}\in dt)=P_x(T_y^{(\alpha)}\in dt , (T_0^{(\alpha)}\leq t))
+ P_x(T_y^{(\alpha)}\in dt , (T_0^{(\alpha)}> t))
\end{equation}

We state the following formula for $P_x(T_0\in dt , (T_y > t))$; e.g. see page 296 in \cite{feller}, as we use it to 
compute (\ref{1})  
\begin{equation}\label{2}
\ds P_x(T_0\in dt , (T_y > t)) = 
\frac{\pi}{y^2}\sum_{n=1}^{\infty}n\exp\Bigl\{-~\frac{\pi^2n^2t}{2y^2}\Bigr\}\sin\Bigl\{\frac{\pi x n}{y}\Bigr\}~dt
\end{equation} 

Since the skew Brownian motion is Brownian motion until it reaches zero for the first time and from the reflection 
principle of Brownian motion, one can write the second term of the right hand side of equation (\ref{1}) using equation (\ref{2}) as follows:
\begin{equation}\label{3}
 \begin{split}
   P_x(T_y^{(\alpha)}\in dt , (T_0^{(\alpha)}> t))
    &= P_x(T_y\in dt , (T_0> t))\\
    &= P_{y-x}(T_0\in dt , (T_y>t))\\
    &= \ds \frac{\pi}{y^2}\sum_{n=1}^{\infty}n\exp\Bigl\{-~\frac{\pi^2n^2t}{2y^2}\Bigr\}\sin\Bigl\{\frac{\pi (y-x) n}{y}\Bigr\}~dt
 \end{split}
\end{equation}

For the first term of the right hand side of equation (\ref{1}), notice that
\begin{multline*}
 P_x(T_0^{(\alpha)}<T_y^{(\alpha)}<t) \\
  \begin{split}
   &= \ds\mathbb{E}_x\Bigl[\textbf{1}_{[T_0^{(\alpha)}<T_y^{(\alpha)}<t]}\Bigr]\\
   &= \ds\mathbb{E}_x\left[\mathbb{E}_x\Bigl[\textbf{1}_{[T_0^{(\alpha)}<T_y^{(\alpha)}<t]}
         \vert T_0^{(\alpha)}, \textbf{1}_{[T_0^{(\alpha)}<T_y^{(\alpha)}]}\Bigr]\right]\\
   &= \ds\int_0^t\mathbb{E}_x\Bigl[\textbf{1}_{[T_0^{(\alpha)}<T_y^{(\alpha)}<t]}\vert T_0^{(\alpha)}=s, 
          \textbf{1}_{[T_0^{(\alpha)}<T_y^{(\alpha)}]}\Bigr]
        P_x(T_0^{(\alpha)}\in ds, (T_0^{(\alpha)}<T_y^{(\alpha)}))\\
   &~~~~+~\ds\int_0^t\mathbb{E}_x\Bigl[\textbf{1}_{[T_0^{(\alpha)}<T_y^{(\alpha)}<t]}\vert T_0^{(\alpha)}=s, 
            \textbf{1}_{[T_0^{(\alpha)}\geq T_y^{(\alpha)}]}\Bigr]
        P_x(T_0^{(\alpha)}\in ds, (T_0^{(\alpha)}\geq T_y^{(\alpha)}))
  \end{split}
\end{multline*}

Using the strong Markov property of skew Brownian motion and since 
$\mathbb{E}_x\Bigl[\textbf{1}_{[T_0^{(\alpha)}<T_y^{(\alpha)}<t]}\vert T_0^{(\alpha)}=s, 
            \textbf{1}_{[T_0^{(\alpha)}\geq T_y^{(\alpha)}]}\Bigr]=0$, one has
\begin{equation*}
 \begin{split}
   P_x(T_0^{(\alpha)}<T_y^{(\alpha)}<t)
   & = \ds\int_0^t\mathbb{E}_0\Bigl[\textbf{1}_{[T_y^{(\alpha)}<t-s]}\Bigr]
        P_x(T_0^{(\alpha)}\in ds, (T_0^{(\alpha)}<T_y^{(\alpha)}))\\
   &= \ds\int_0^tP_0(T_y^{(\alpha)}<t-s)P_x(T_0^{(\alpha)}\in ds, (T_0^{(\alpha)}<T_y^{(\alpha)}))
 \end{split}
\end{equation*} 

Using equation (\ref{first-passage-time-from-0-simple}) and again from the fact that 
the skew Brownian motion is Brownian motion until it reaches zero for the first time, and 
$P_x(T_0^{\alpha}<T_y^{\alpha})=P_x(T_0<T_y)$ (note here $0<x<y$), one has
\begin{multline}
 \begin{split}
  P_x(T_0^{(\alpha)}<T_y^{(\alpha)}<t)
   &= \int_0^t 2\alpha\sum_{h=1}^{\infty}(1-2\alpha)^{h-1}P_0(T_{(2h-1)y}<t-s)
        P_x(T_0 \in ds , (T_0<T_y)) \\
   &= 2\alpha\sum_{h=1}^{\infty}(1-2\alpha)^{h-1}~\int_0^t P_0(T_{(2h-1)y}<t-s) 
          P_x(T_0 \in ds , (T_0<T_y)) \\
   &= 2\alpha\sum_{h=1}^{\infty}(1-2\alpha)^{h-1}~\int_0^t P_0(T_{(2h-1)y}<t-s)
         P_x(T_0 \in ds) \\
   &~~~-~ 2\alpha\sum_{h=1}^{\infty}(1-2\alpha)^{h-1}~\int_0^t P_0(T_{(2h-1)y}<t-s) 
          P_x(T_0 \in ds , (T_0>T_y)) \\
   &= 2\alpha\sum_{h=1}^{\infty}(1-2\alpha)^{h-1}P_x(T_{(2h-1)y} < t)\\
   &~~~-~2\alpha\sum_{h=1}^{\infty}(1-2\alpha)^{h-1}~\int_0^t P_0(T_{(2h-1)y}<t-s) 
          P_x(T_0 \in ds , (T_0>T_y)) 
 \end{split}
\end{multline}

For $h\geq 1$, the convolution integral in the sum of the second term in the above equation 
can be written using (\ref{2}) as;

\begin{multline}
 \int_0^t P_0(T_{(2h-1)y}<t-s)P_x(T_0 \in ds , (T_0>T_y))\\
 \begin{split}
  &= \int_0^t P_x(T_0 <t-s , (T_0>T_y))P_0(T_{(2h-1)y}\in ds)\\
  &= \int_0^t \sum_{n=1}^{\infty}\frac{2}{\pi n}\sin\Bigl\{\frac{\pi (y-x) n}{y}\Bigr\}P_0(T_{(2h-1)y}\in ds)\\
  &~~~-~\int_0^t \sum_{n=1}^{\infty}\frac{2}{\pi n}
          \exp\Bigl\{-~\frac{\pi^2n^2(t-s)}{2y^2}\Bigr\}\sin\Bigl\{\frac{\pi (y-x) n}{y}\Bigr\}P_0(T_{(2h-1)y}\in ds)\\
  &= \sum_{n=1}^{\infty}\frac{2}{\pi n}\sin\Bigl\{\frac{\pi (y-x) n}{y}\Bigr\}P_0(T_{(2h-1)y}<t)\\
  &~~~-~\sum_{n=1}^{\infty}\frac{2}{\pi n}\exp\Bigl\{-~\frac{\pi^2n^2t}{2y^2}\Bigr\}\sin\Bigl\{\frac{\pi (y-x) n}{y}\Bigr\}
           \int_0^t \exp\Bigl\{\frac{\pi^2n^2s}{2y^2}\Bigr\}P_0(T_{(2h-1)y}\in ds)
 \end{split}
\end{multline}

Then one has

\begin{multline}
 P_x(T_0^{(\alpha)}<T_y^{(\alpha)}<t) \\
 \begin{split}
   &= 2\alpha\sum_{h=1}^{\infty}(1-2\alpha)^{h-1}P_x(T_{(2h-1)y} < t)\\
   &~~~-~2\alpha\sum_{h=1}^{\infty}\sum_{n=1}^{\infty}(1-2\alpha)^{h-1}\frac{2}{\pi n}
      \sin\Bigl\{\frac{\pi (y-x) n}{y}\Bigr\}P_0(T_{(2h-1)y}<t)\\
   &~~~+~2\alpha\sum_{h=1}^{\infty}\sum_{n=1}^{\infty}(1-2\alpha)^{h-1}\frac{2}{\pi n}
         \exp\Bigl\{-~\frac{\pi^2n^2t}{2y^2}\Bigr\}\sin\Bigl\{\frac{\pi (y-x) n}{y}\Bigr\}\\
   &~~~~~~~~~~~~~~~~~~~~~~~~~~~\times \int_0^t \exp\Bigl\{\frac{\pi^2n^2s}{2y^2}\Bigr\}P_0(T_{(2h-1)y}\in ds)
 \end{split}
\end{multline}

By differentiating the above equation with respect to $t$, one has

\begin{multline}\label{4}
 P_x(T_y^{(\alpha)}\in dt , (T_0^{(\alpha)}\leq T_y^{(\alpha)})) \\
 \begin{split}
   &= 2\alpha\sum_{h=1}^{\infty}(1-2\alpha)^{h-1}P_x(T_{(2h-1)y} \in dt)\\
   &~~~-~2\alpha\sum_{h=1}^{\infty}\sum_{n=1}^{\infty}(1-2\alpha)^{h-1}\frac{2}{\pi n}
      \sin\Bigl\{\frac{\pi (y-x) n}{y}\Bigr\}P_0(T_{(2h-1)y}\in dt)\\
   &~~~+~2\alpha\sum_{h=1}^{\infty}\sum_{n=1}^{\infty}(1-2\alpha)^{h-1}\frac{2}{\pi n}
         \exp\Bigl\{-~\frac{\pi^2n^2t}{2y^2}\Bigr\}\sin\Bigl\{\frac{\pi (y-x) n}{y}\Bigr\}\\
   &~~~~~~~~~~~~~~~~~~~~~~~~~~~\times \exp\Bigl\{\frac{\pi^2n^2t}{2y^2}\Bigr\}P_0(T_{(2h-1)y}\in dt)\\
   &~~~-~2\alpha\sum_{h=1}^{\infty}\sum_{n=1}^{\infty}(1-2\alpha)^{h-1}\frac{\pi n}{y^2}
         \exp\Bigl\{-~\frac{\pi^2n^2t}{2y^2}\Bigr\}\sin\Bigl\{\frac{\pi (y-x) n}{y}\Bigr\}\\
   &~~~~~~~~~~~~~~~~~~~~~~~~~~~\times \int_0^t \exp\Bigl\{\frac{\pi^2n^2s}{2y^2}\Bigr\}P_0(T_{(2h-1)y}\in ds)~dt
 \end{split}
\end{multline}

Recalling that $\ds P_x(T_y\in dt)=\frac{|y-x|}{\sqrt{2\pi}~t^{3/2}}\exp\Bigl\{-~\frac{(y-x)^2}{2t}\Bigr\}~dt$, 
and by (\ref{1}), (\ref{3}), (\ref{4}), one has

\begin{multline}
 P_x(T_y^{(\alpha)}\in dt)\\
 \begin{split}
  &= \ds 2\alpha\sum_{h=1}^{\infty}(1-2\alpha)^{h-1}\frac{|x-(2h-1)y|}{\sqrt{2\pi}~t^{3/2}}\exp\Bigl\{-~\frac{(x-(2h-1)y)^2}{2t}\Bigr\}~dt\\
  &~~~-~2\alpha\sum_{h=1}^{\infty}\sum_{n=1}^{\infty}(1-2\alpha)^{h-1}\frac{\pi n}{y^2}
         \exp\Bigl\{-~\frac{\pi^2n^2t}{2y^2}\Bigr\}\sin\Bigl\{\frac{\pi (y-x) n}{y}\Bigr\}\\
   &~~~~~~~~~~~~~~~~~~~~~~\times \int_0^t \exp\Bigl\{\frac{\pi^2n^2s}{2y^2}\Bigr\}
         \frac{(2h-1)y}{\sqrt{2\pi}~s^{3/2}}\exp\Bigl\{-~\frac{((2h-1)y)^2}{2s}\Bigr\}~ds~dt\\
  &~~~+~\ds \frac{\pi}{y^2}\sum_{n=1}^{\infty}n\exp\Bigl\{-~\frac{\pi^2n^2t}{2y^2}\Bigr\}\sin\Bigl\{\frac{\pi (y-x) n}{y}\Bigr\}~dt \\
  &=\ds 2\alpha\sum_{h=1}^{\infty}(1-2\alpha)^{h-1}\frac{|x-(2h-1)y|}{\sqrt{2\pi}~t^{3/2}}\exp\Bigl\{-~\frac{(x-(2h-1)y)^2}{2t}\Bigr\}~dt\\
  &~~~-\sum_{n=1}^{\infty}\frac{2}{\pi n}\sin\Bigl\{\frac{\pi (y-x) n}{y}\Bigr\}\\
  &~~~~~~~~\times \int_0^t\frac{\pi^2 n^2}{2y^2}\exp\Bigl\{-\frac{\pi^2n^2(t-s)}{2y^2}\Bigr\}
     ~2\alpha\sum_{h=1}^{\infty}(1-2\alpha)^{h-1}\frac{(2h-1)y}{\sqrt{2\pi}~s^{3/2}}
         \exp\Bigl\{-~\frac{((2h-1)y)^2}{2t}\Bigr\}~ds~dt\\
  &~~~+~\ds \frac{\pi}{y^2}\sum_{n=1}^{\infty}n\exp\Bigl\{-~\frac{\pi^2n^2t}{2y^2}\Bigr\}\sin\Bigl\{\frac{\pi (y-x) n}{y}\Bigr\}~dt 
 \end{split}
\end{multline}

\noindent {\it Case $0<y<x$} :

Notice that, in this case all skew Brownian motion paths till the first passage time to $y$ are away from $0$. 
Thus we have
\[
P_x(T_y^{(\alpha)}\in dt) =
  \frac{|y-x|}{\sqrt{2\pi}t^{3/2}}\exp\Bigl\{-~\frac{(y-x)^2}{2t}\Bigr\}~dt.
\]

\end{proof}

\section{Asymmetries in First Passage Time Density}
\label{examples}

A basic property of the first passage time density of regular Brownian
motion is that it is symmetric in $x$ and $y$, i.e., that $P_x(T_y\in
dt)=P_y(T_x\in dt)$ for any $x,y\in\mathbb{R}$ and $t\geq0$.
A practically important feature of skew Brownian motion is that it
introduces an asymmetry in the first passage time density for $x$ and
$y$ on opposite sides of the origin.
\cite{app_AAP} first demonstrated this by proving that there is a stochastic
ordering between the random variables with densities $P_{-y}(T_y^{(\alpha)} \in
dt)$ and $P_{y}(T_{-y}^{(\alpha)} \in dt)$ when $\alpha \neq 1/2$.
In simple terms, it takes longer to cross from negative to positive
than to cross from positive to negative when $\alpha < 1/2$, and the
opposite is true when $\alpha > 1/2$.

Our results quantify these relationships further by explicitly giving the
densities in each case. We conclude by illustrating the numerically
computed densities $P_{-1}(T_1^{(\alpha)} \in dt)$ and
$P_{1}(T_{-1}^{(\alpha)} \in dt)$ for different values of $\alpha$. 
In Figure \ref{ordering}(a) we see that
$P_{-1}(T_1^{(\alpha)} \in dt) < P_1(T_{-1}^{(\alpha)} \in dt)$ for
$\alpha < 1/2$. In Figure \ref{ordering}(b), we recover the symmetry of
Brownian motion when $\alpha=1/2$. In Figure \ref{ordering}(c), we see that
$P_{-1}(T_1^{(\alpha)} \in dt) > P_1(T_{-1}^{(\alpha)} \in dt)$ for
$\alpha > 1/2$.

\def \nudgeup {\vspace{-0.5cm}}
\def \nudgedown {\vspace{0.4cm}}

\begin{figure}[h]
\centering
  \begin{tabular}{c}
    \epsfig{figure=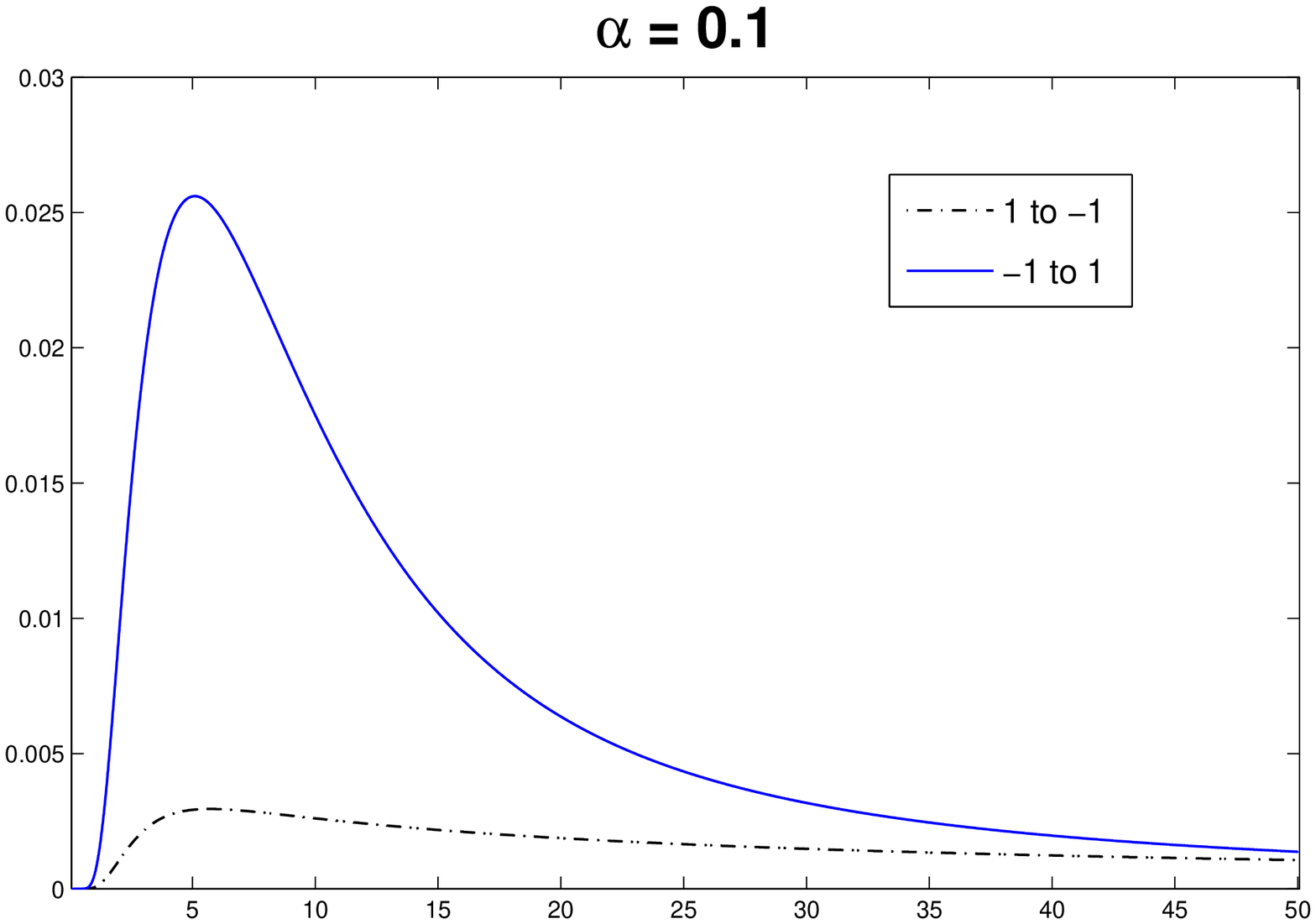,width=2in}\quad
    \epsfig{figure=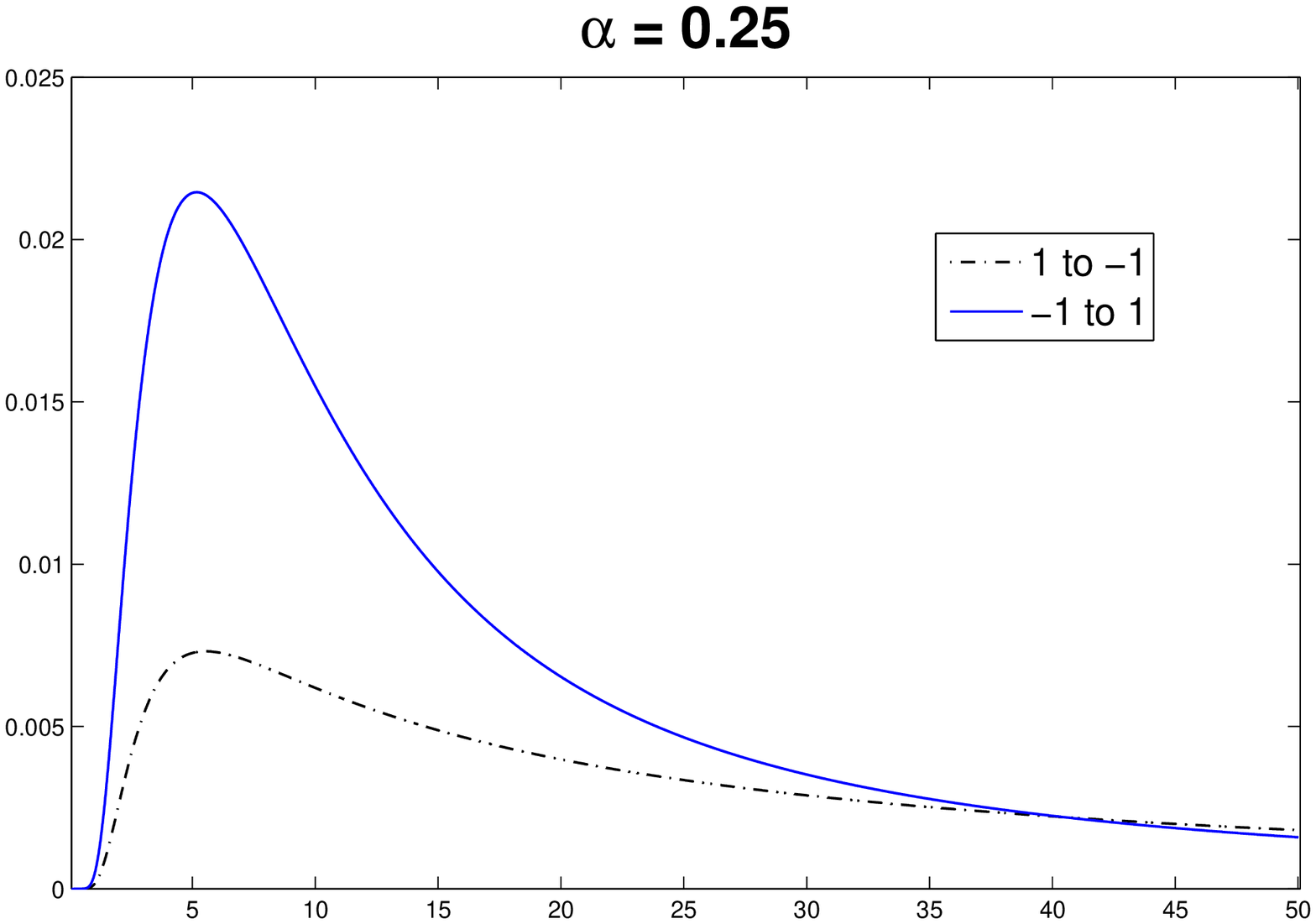,width=2in} \nudgeup \\
    (a) $\alpha < 1/2$ \nudgedown \\
    \includegraphics[width=2in]{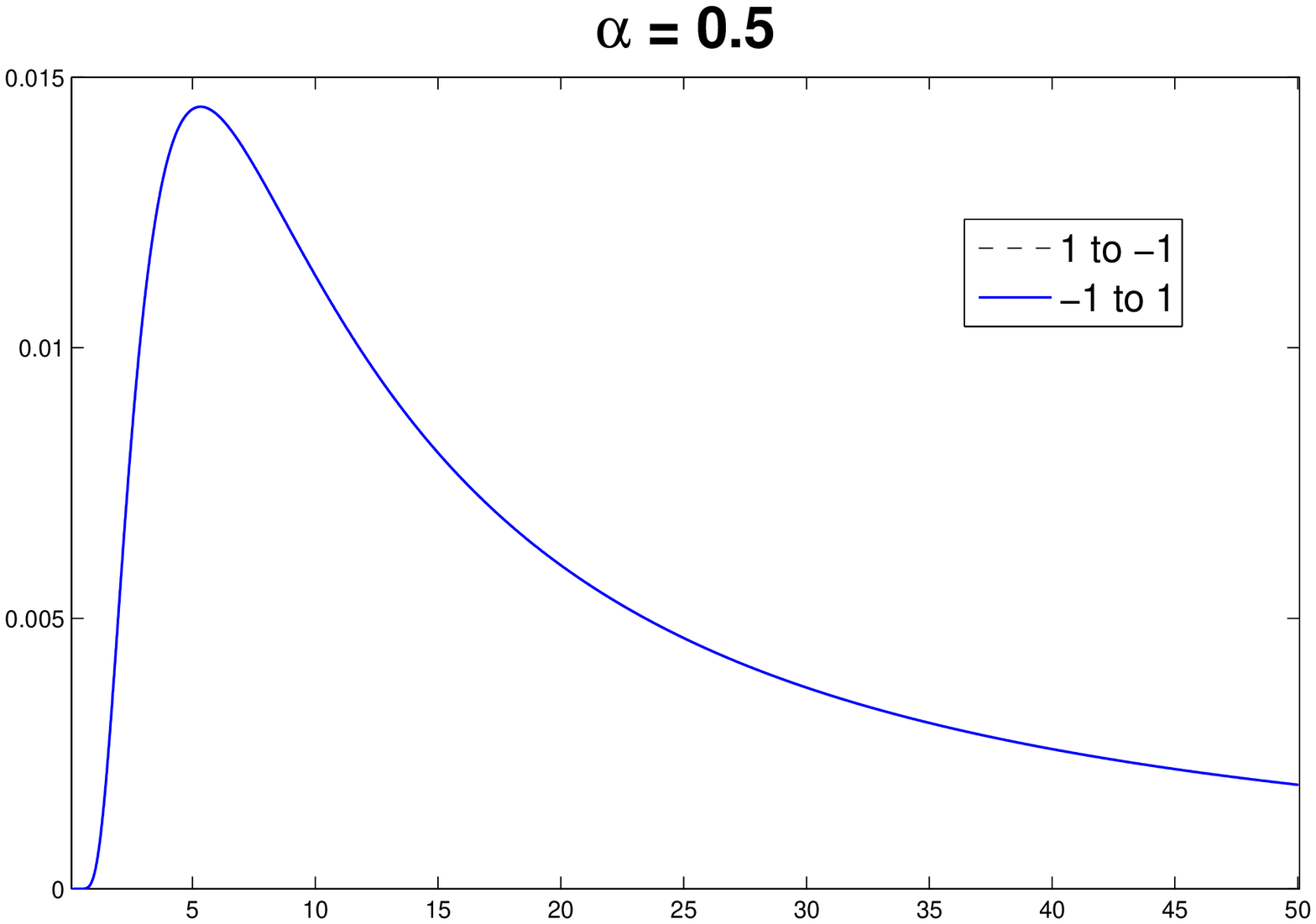} \nudgeup \\
    (b) $\alpha = 1/2$ \nudgedown \\
    \includegraphics[width=2in]{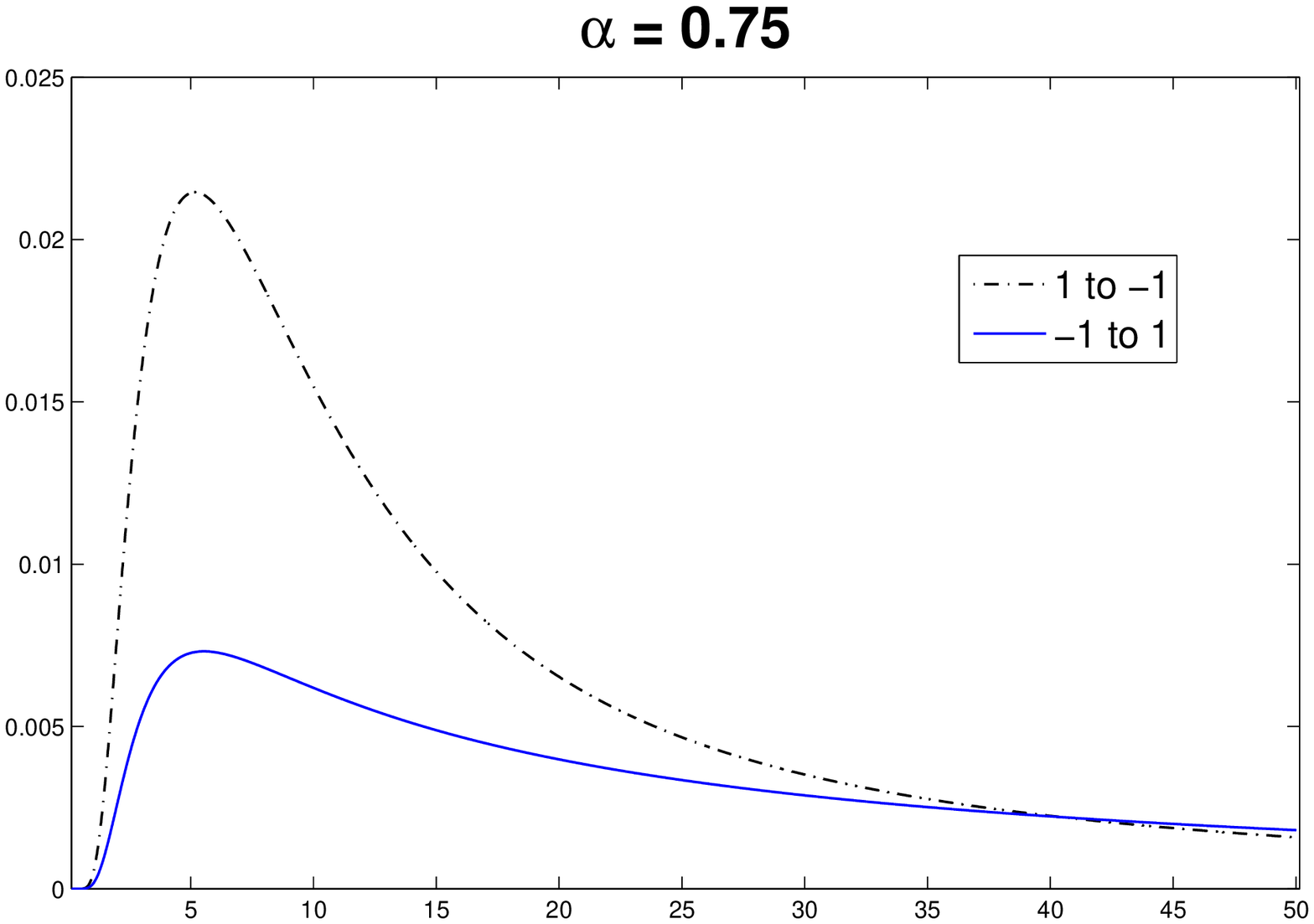}\quad
    \includegraphics[width=2in]{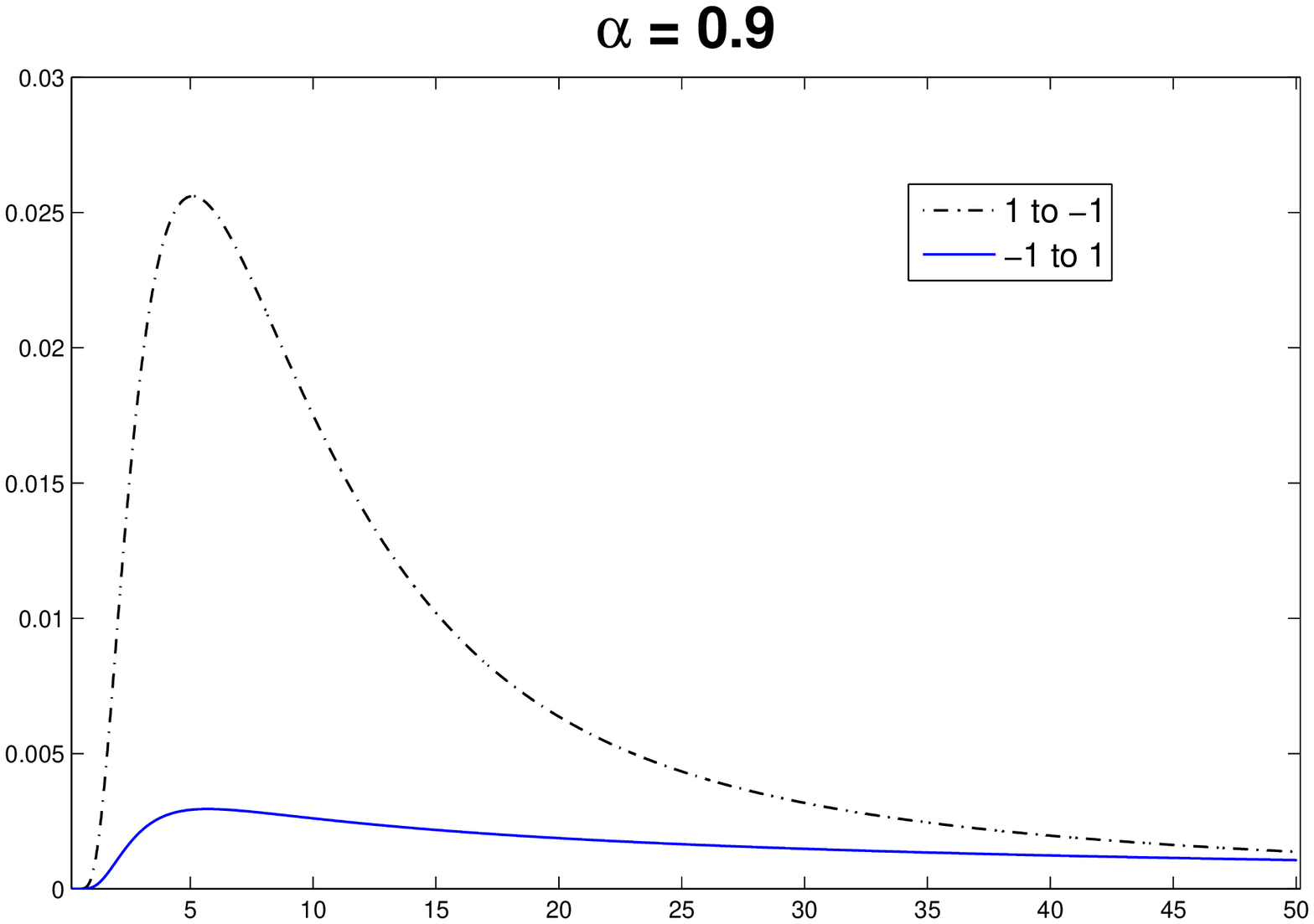} \nudgeup \\
    (c) $\alpha>1/2$ \nudgedown
  \end{tabular}
  \label{ordering}
  \caption{The densities $P_{-1}(T_1^{(\alpha)} \in dt)$ (solid)
    and $P_{1}(T_{-1}^{(\alpha)} \in dt)$ (dashed) for different
    values of $\alpha$.}
\end{figure}

\vspace{1cm}
\acks
The authors wish to express their appreciation to Professor Enrique
Thomann, Professor Edward Waymire and Professor Julia Jones for the
great interest they showed in this work, and for their fruitful
comments and useful advice.


%
%
%
%
\bibliographystyle{apalike}	
\bibliography{myrefs}		

\begin{thebibliography}{}

\bibitem[Appuhamillage et~al., 2009]{app_WRR}
Appuhamillage, T., Bokil, V.~A., Thomann, E., Waymire, E., and Wood, B. (2009).
\newblock {On Skewness in Breakthrough Curves Due to Solute Transport Across an
  Interface}.
\newblock {\em {Water Resour. Res.}}, page http://doi:10.1029/2009WR008258.

\bibitem[Appuhamillage et~al., 2010]{app_AAP}
Appuhamillage, T., Bokil, V.~A., Thomann, E., Waymire, E., and Wood, B. (2010).
\newblock {Occupation and Local Times for Skew Brownian Motion with
  Applications to Dispersion Across an Interface}.
\newblock {\em {Annals of Applied Probability (to appear)}}.

\bibitem[Barlow et~al., 1989]{Barlow89}
Barlow, M., Pitman, J., and Yor, M. (1989).
\newblock On {W}alsh's {B}rownian motions.
\newblock In {\em S\'eminaire de {P}robabilit\'es, {XXIII}}, volume 1372 of
  {\em Lecture Notes in Math.}, pages 275--293. Springer, Berlin.

\bibitem[Berkowitz et~al., 2009]{Berkowitz09}
Berkowitz, B., Cortis, A., Dror, I., and Scher, H. (2009).
\newblock {Laboratory experiments on dispersive transport across interfaces:
  The role of flow direction}.
\newblock {\em {Water Resour. Res.}}, 45(2):http://doi:10.1029/2008WR007342.

\bibitem[Bhattacharya and Waymire, 2009]{bhattway1}
Bhattacharya, R. and Waymire, E. (2009).
\newblock {Stochastic processes with applications}.
\newblock {\em SIAM Classics in Applied Mathematics}.

\bibitem[Blackwell, 1997]{blackwell}
Blackwell, P. (1997).
\newblock {Random diffusion models for animal movement.}
\newblock {\em Ecological Modelling.}, 100:87–102.

\bibitem[Cs\'{a}ki and Hu, 2004]{Csaki}
Cs\'{a}ki, E. and Hu, Y. (2004).
\newblock Invariance principles for ranked excursion lengths and heights.
\newblock {\em Electronic Communications in Probability}, 9:14--21.

\bibitem[Dalziel et~al., 2008]{dalziel}
Dalziel, B.~D., Morales, J.~M., and Fryxell, J.~M. (2008).
\newblock {Fitting probability distributions to animal movement trajectories:
  using artificial neural networks to link distance, resources, and memory.}
\newblock {\em American Naturalist.}, 172:248–258.

\bibitem[Decamps et~al., 2006]{decamps2006}
Decamps, M., Goovaerts, M., and Schoutens, W. (2006).
\newblock {Asymmetric skew Bessel processes and their applications to finance}.
\newblock {\em Journal of Computational and Applied Mathematics},
  186(1):130--147.

\bibitem[Fauchald and Tveraa, 2003]{Fauchald2003}
Fauchald, P. and Tveraa, T. (2003).
\newblock {Using first-passage time in the analysis of area-restricted search
  and habitat selection}.
\newblock {\em Ecology}, 84(2):282--288.

\bibitem[Feller, 1968]{feller}
Feller, W. (1968).
\newblock {An introduction to probability theory and its applications. Vol. I.}
\newblock {\em Oxford, England: Wiley}.

\bibitem[It\^{o} and McKean, 1963]{Ito_McKean63}
It\^{o}, K. and McKean, H. (1963).
\newblock Brownian motions on a half line.
\newblock {\em Ill. J. Math.}, 7(1):181--231.

\bibitem[Le~Gall, 1984]{LeGall}
Le~Gall, J.-F. (1984).
\newblock One-dimensional stochastic differential equations involving the local
  times of the unknown process.
\newblock In {\em Stochastic analysis and applications (Swansea, 1983)}, volume
  1095 of {\em Lecture Notes in Math.}, pages 51--82. Springer, Berlin.

\bibitem[McKenzie et~al., 2009]{McKenzie}
McKenzie, H.~W., Lewis, M.~A., and Merrill, E.~H. (2009).
\newblock {First passage time analysis of animal movement and insights into the
  functional response.}
\newblock {\em Bull. Math. Biol.}, 71(1):107--129.

\bibitem[Ouknine, 1990]{Ouknine}
Ouknine, Y. (1990).
\newblock Le ``{S}kew-{B}rownian motion'' et les processus qui en d\'erivent.
\newblock {\em Teor. Veroyatnost. i Primenen.}, 35(1):173--179.

\bibitem[Pinaud et~al., 2005]{pinaud}
Pinaud, D., Cherel, Y., and Weimerskirch, H. (2005).
\newblock {Effect of environmental variability on habitat selection, diet,
  provisioning behaviour and chick growth in yellow-nosed albatrosses.}
\newblock {\em Marine Ecology Progress Series.}, 298:295--304.

\bibitem[Pitman and Yor, 2001]{Yor}
Pitman, J. and Yor, M. (2001).
\newblock On the distribution of ranked heights of excursions of a brownian
  bridge.
\newblock {\em The Annals of Probability}, 29:361--384.

\bibitem[Polovina et~al., 2001]{polovina}
Polovina, J.~J., Howell, E., Kobayashi, D.~R., and Seki, M.~P. (2001).
\newblock {The transition zone chlorophyll front, a dynamic global feature
  defining migration and forage habitat for marine resources.}
\newblock {\em Progress in Oceanography}, 49:469–483.

\bibitem[Ramirez et~al., 2008]{Ramirez08}
Ramirez, J., Thomann, E., Waymire, E., Chastanet, J., and Wood, B. (2008).
\newblock {A note on the theoretical foundations of particle tracking methods
  in heterogeneous porous media}.
\newblock {\em {Water Resour. Res.}}, 44(1):W01501,doi:10.1029/2007WR005914.

\bibitem[Ramirez et~al., 2006]{Ramirez06}
Ramirez, J., Thomann, E., Waymire, E., Haggerty, R., and Wood, B. (2006).
\newblock {A generalized Taylor-Aris formula and skew diffusion}.
\newblock {\em SIAM J. Multiscale Modeling and Simulation}, 5(3):786--801.

\bibitem[Ramirez, 2010]{Ramirez10}
Ramirez, J.~M. (2010).
\newblock {Multi-skew Brownian motion and diffusion in layered media}.
\newblock {\em (preprint)}.

\bibitem[Schultz and Crone, 2001]{butterfly}
Schultz, C.~B. and Crone, E.~E. (2001).
\newblock {Edge-Mediated Dispersal Behavior in a Prairie Butterfly.}
\newblock {\em Ecology}, 82:1879–1892.

\bibitem[Turchin, 1991]{turchin1991}
Turchin, P. (1991).
\newblock {Translating Foraging Movements in Heterogeneous Environments into
  the Spatial Distribution of Foragers.}
\newblock {\em Ecology.}, 72(4):1253–1266.

\bibitem[Turchin, 1996]{turchin1996}
Turchin, P. (1996).
\newblock {Fractal Analyses of Animal Movement: A Critique.}
\newblock {\em Ecology.}, 77(7):2086--2090.

\bibitem[Walsh, 1978]{walsh1}
Walsh, J.~B. (1978).
\newblock A diffusion with a discontinuous local time.
\newblock {\em Asterisque}, 52--53:37--45.

\bibitem[Wiens and Milne, 1989]{Wiens1989}
Wiens, J.~a. and Milne, B.~T. (1989).
\newblock {Scaling of landscapes in landscape ecology, or, landscape ecology
  from a beetle's perspective}.
\newblock {\em Landscape Ecology}, 3(2):87--96.

\end{thebibliography}






\end{document}